\renewcommand*{\backref}[1]{}
\renewcommand*{\backrefalt}[4]{\quad \tiny
  \ifcase #1 (\textbf{NOT CITED.})%
  \or    (Cited on page~#2.)%
  \else   (Cited on pages~#2.)%
  \fi}
\numberwithin{equation}{section}     
\setlist[enumerate,1]{label={\upshape(\roman*)},ref=\roman*}
\setlist[enumerate,2]{label={\upshape(\alph*)},ref=\alph*}
 \def\NN{{\mathbb N}}  \def\PP{{\mathbb P}}
\def\QQ{{\mathbb Q}} \def\RR{{\mathbb R}}  \def\TT{{\mathbb T}}
 \def\ZZ{{\mathbb Z}}
\def\cA{\mathcal{A}}    
    \def\cU{\mathcal{U}}
   \def\cR{\mathcal{R}}
\newtheorem*{teo*}{Theorem}
\newtheorem{teo}{Theorem}[section]
\newtheorem{quest}{Question}
\newtheorem{cor}[teo]{Corollary}
\newtheorem{prop}[teo]{Proposition}
\theoremstyle{definition}
\newtheorem{defi}{Definition}
\theoremstyle{remark}
\newtheorem{remark}[teo]{Remark}
\newcommand{\eps}{\varepsilon}
\DeclareMathOperator{\Diff}{Diff}
\title[Uniform expansion]{A remark on uniform expansion}
\author[R.~Potrie]{Rafael Potrie} 
\address{Centro de Matem\'atica, Universidad de la Rep\'ublica, Uruguay}
\email{rpotrie@cmat.edu.uy}
\urladdr{http://www.cmat.edu.uy/~rpotrie/}
\thanks{Rafael Potrie was partially supported by CSIC 618, FCE-1-2017-1-135352. This work was started while the author was a Von Neumann fellow at IAS, funded by Minerva Research Fundation Membership Fund and NSF DMS-1638352.}
\begin{document}

\begin{abstract}
For every $\cU \subset \Diff^\infty_{vol}(\TT^2)$ there is a measure of finite support contained in $\cU$ which is uniformly expanding. 

\end{abstract}

\maketitle

Let $\mu$ be a probability measure in $\Diff^r(M)$ where $M$ is a closed manifold of dimension $d := \dim (M)$. We denote by $\mu^{(1)}=\mu$ and $\mu^{(n)} = \mu \ast \mu^{(n-1)}$. Note that $\mu^{(n)}$ is the pushforward by the composition of the product measure $\mu^n$ in $(\Diff^{r}(M))^n$.

\begin{defi}[\cite{EL,BEF}]\label{def-Nexp} A measure $\mu$ in $\Diff^r(M)$ is \emph{uniformly expanding} if there exists $N>0$ so that for every $(x,v) \in T^1M$ one has that: 
$$ \int \log \|D_x f v \| d\mu^{(N)}(f) > 2. $$
\end{defi}

This is a robust\footnote{To be precise, if $\mu$ has compact support, then there is a neighborhood $\cU$ of its support such that for any measure $\mu'$ which has support in $\cU$ and is weak-$\ast$-close to $\mu$, then $\mu'$ is also uniformly expanding (see \eqref{eq:mun2} below).} condition on $\mu$. This notion as well as similar ones have been studied extensively recently as it allows to describe quite precisely the stationary measures for random walks with $\mu$ as law (see \cite{DK,BRH,LX,EL,Chung,BEF}). 

Here we will make one remark (which can be related to some results, e.g. in \cite{BXY,Chung,Zhang}) that points in the direction of the abundance of uniform expansion. 

\begin{teo}\label{teo-main}
For every $\cU$ open set in $\Diff_{vol}(\TT^2)$ there is a finitely supported measure $\mu$ whose support is contained in $\cU$ and $\mu$ is uniformly expanding. 
\end{teo}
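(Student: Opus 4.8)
\emph{Strategy of the proof.}

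The plan is to recast Definition~\ref{def-Nexp} as a statement about stationary measures, and then to build $\mu$ by small perturbations inside $\cU$. A finitely supported $\mu=\sum_i p_i\delta_{f_i}$ acts on the unit tangent bundle $T^1\TT^2$ by $(f,(x,v))\mapsto\bigl(f(x),\,D_xfv/\|D_xfv\|\bigr)$; let $P$ be the associated Markov operator on $C(T^1\TT^2)$ and set $\psi(x,v)=\int\log\|D_xfv\|\,d\mu(f)$, a continuous bounded function. The cocycle relation reads
\[
\int\log\|D_xfv\|\,d\mu^{(N)}(f)=\sum_{n=0}^{N-1}(P^n\psi)(x,v),\qquad N\ge 1 .
\]
If some $\mu$-stationary measure $m$ on $T^1\TT^2$ has $\int\psi\,dm\le 0$, then $\int\bigl(\sum_{n<N}P^n\psi\bigr)\,dm=N\int\psi\,dm\le 0$ for all $N$, so $\mu$ is not uniformly expanding; conversely, if $\mu$ is not uniformly expanding, choose for each $N$ a point $z_N$ with $\sum_{n<N}P^n\psi(z_N)\le 2$, and any weak-$\ast$ limit of $\tfrac1N\sum_{n<N}P^n\delta_{z_N}$ is, by Krylov--Bogolyubov, a $\mu$-stationary $m$ with $\int\psi\,dm\le 0$. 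Hence \emph{$\mu$ is uniformly expanding if and only if every $\mu$-stationary measure on $T^1\TT^2$ has positive fibered exponent $\int\psi\,dm$}; it suffices to exhibit such a $\mu$ with $\supp\mu\subset\cU$.

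Since volume preservation makes the derivative cocycle take values in $SL(2,\RR)$, the two fibered Lyapunov exponents over any ergodic $\mu$-stationary measure on $\TT^2$ sum to $0$, and the classification of $SL(2,\RR)$-cocycles (Furstenberg, Ledrappier, Zimmer) shows that a stationary measure of non-positive fibered exponent on $T^1\TT^2$ can occur only if the cocycle preserves, over some stationary measure on $\TT^2$, either a measurable finite union of line fields or a measurable conformal structure. I therefore aim to choose the $f_i$ so that: (i) the random walk on $\TT^2$ is minimal, hence --- by the conservative case of the surface measure rigidity of \cite{BRH} --- has Lebesgue as its only stationary measure on $\TT^2$; (ii) over Lebesgue the derivative cocycle is strongly irreducible and non-conformal; and (iii) its top Lyapunov exponent with respect to Lebesgue is positive. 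Then over Lebesgue the cocycle is proximal and strongly irreducible, so by Furstenberg's theory every $\mu$-stationary measure on $T^1\TT^2$ has positive fibered exponent, and by the previous paragraph $\mu$ is uniformly expanding.

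Each of (i)--(iii) is a robust property compatible with arbitrarily small perturbations, so all three can be installed inside $\cU$: fixing $f_0\in\cU$ one looks for $\mu$ supported on maps $f_0\circ g_1,\dots,f_0\circ g_k$ with the $g_j$ being $C^\infty$-close to the identity. Minimality (and the absence of proper invariant open sets and of finite invariant sets) is obtained by composing with tiny ``irrational translation'' factors together with a generic small perturbation --- for $f_0$ near the identity the small translations alone already generate a minimal semigroup, and the general case follows by the same device plus a transversality argument. Strong irreducibility, non-conformality and positivity of the Lebesgue exponent are produced by inserting small transverse ``shears'' in the derivative at many points; this is the mechanism behind the abundance results of \cite{BXY,Chung,Zhang} and rests on the stability of positive Lyapunov exponents under random perturbation of area-preserving surface maps. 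Such perturbations yield only an exponent of size comparable to the perturbation, but this is harmless: the first paragraph shows uniform expansion needs only \emph{some} $N$, and an $N$ inversely proportional to that exponent works.

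The main obstacle is achieving (ii)--(iii) for an \emph{arbitrary} $f_0$: one must destroy, by arbitrarily small perturbations, every zero-exponent (``elliptic'') feature that $f_0$ may carry --- invariant curves of KAM islands, invariant conformal structures, and the like --- robustly enough that no $\mu$-stationary measure of non-positive fibered exponent survives on $T^1\TT^2$. This is precisely where the measure rigidity of \cite{BRH} is used, to reduce the possible bad structures to Lebesgue-measurable ones, and where the quantitative perturbation arguments in the circle of \cite{BXY,Chung,Zhang} carry the weight; combined with the elementary reduction of the first paragraph, they yield the theorem.
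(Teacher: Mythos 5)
Your opening reduction (uniform expansion fails iff some $\mu$-stationary measure on $T^1\TT^2$ has non-positive fibered exponent) is correct and is essentially Theorem~\ref{teo.equivunifexp}, but the rest of the plan has two genuine gaps. First, step (i) is circular: the measure rigidity of \cite{BRH} classifies \emph{hyperbolic} stationary measures \emph{without non-random stable directions}, which is exactly the property you are trying to establish; minimality of the action on $\TT^2$ does not by itself rule out a singular hyperbolic stationary measure whose stable direction is non-random, and such a measure would produce a stationary measure on $T^1\TT^2$ with non-positive fibered exponent. Since you insist on working directly with a finitely supported $\mu$, you have no mechanism to control its stationary measures: a finite convex combination of Dirac masses does not smooth, so you cannot force stationary measures to see Lebesgue. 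This is precisely the difficulty the paper sidesteps by first building a \emph{continuous} diffusion $\hat\mu$ along translation and shear families: convolving twice with $\hat\mu$ contains a term $c_0\int\int\delta_{R_{(t,s)}\circ f_0}\,dt\,ds$, which forces every $\hat\mu$-stationary measure to have an absolutely continuous part (Proposition~\ref{p.acstationary}); only then does an explicit Poincar\'e-recurrence argument (Proposition~\ref{l.noninv}) kill all invariant line fields and conformal structures, and the finite support is recovered \emph{a posteriori} by discretizing and using that uniform expansion is an open condition (one $N$, finitely many inequalities on the compact space $T^1\TT^2$). Verifying the needed non-invariance directly for a finite family is listed in the paper as an open question.

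Second, steps (ii)--(iii) --- strong irreducibility, non-conformality, and positivity of the exponent over Lebesgue --- are not proved but attributed to ``the mechanism behind \cite{BXY,Chung,Zhang}''; your own closing paragraph concedes this is ``the main obstacle''. No quantitative Lyapunov-exponent estimate is actually needed: the paper gets positivity softly from the invariance principle (Theorem~\ref{teo-invpple}), since a zero-exponent stationary measure would carry an invariant measurable line field or conformal structure, which Proposition~\ref{l.noninv} forbids for any measure not singular with respect to volume. To repair your argument you would need either to supply the missing smoothing/rigidity input for finitely supported measures (essentially answering the paper's open questions) or to restructure it as the paper does: continuous diffusion first, discretization by openness last.
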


As a consequence of the results of \cite{BRH,LX,Chung} one deduces that: 

\begin{cor}\label{cor-main}
For every $\cU \subset \Diff_{vol}(\TT^2)$ there is a measure $\mu$ finitely supported in $\cU$ such that orbit of every point under the random walk on $\TT^2$ produced by $\mu$ equidistributes in $\TT^2$. Moreover, for every $\mu'$ close to $\mu$ in the weak-$\ast$-topology every orbit of the random walk is either finite or dense (in particular, the elements of the support of $\mu$ generate a stably ergodic semigroup\footnote{A semigroup generated by diffeomorphisms $f_1,\ldots, f_k$ is \emph{stably ergodic} if there are neighborhoods $\cU_i$ of $f_i$ such that for every family $\{g_i\}_i$ with $g_i \in \cU_i$ we have that the semigroup generated by $\{g_1, \ldots, g_k\}$ verifies that every set which is invariant under all the $g_i$ has full or zero measure. The argument also gives robust transitivity which also follows by a stronger result \cite{KN}, however, our argument provides robust transitivity even outside the set of volume preserving diffeomorphisms, see Remark \ref{r.nonvolumepres}.}). \end{cor}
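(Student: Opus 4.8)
The plan is to take the measure $\mu$ produced by Theorem \ref{teo-main} and convert its uniform expansion into the stated orbit statistics purely by feeding it into the stationary-measure and orbit-closure machinery of \cite{BRH,LX,Chung}; no new estimate is needed beyond what Theorem \ref{teo-main} already supplies. So the first step is to record precisely what those references give for a finitely supported, volume-preserving, uniformly expanding $\mu$ on the surface $\TT^2$. The classification of stationary measures (\cite{BRH}, with the refinements of \cite{LX,Chung}) yields that there are only finitely many ergodic $\mu$-stationary measures and that each is either supported on a finite orbit or absolutely continuous; since every element of $\supp\mu$ preserves volume, the volume is itself stationary, and uniform expansion forces it to be ergodic and to be the unique non-atomic ergodic stationary measure. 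The orbit-closure results of \cite{Chung} (together with \cite{LX}) then give the dichotomy that the orbit closure of any point under the random walk is either finite or all of $\TT^2$, and that in the second case the orbit equidistributes towards volume.

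Granting this, the first assertion of the corollary is exactly the statement that no orbit is finite for a suitably chosen $\mu$. The finite orbits are precisely the supports of the finitely many atomic ergodic stationary measures, so their union $F$ is a finite set. I would therefore choose $h\in\cU$ with $h(F)\cap F=\emptyset$ and set $\mu_\eps=(1-\eps)\mu+\eps\,\delta_h$: any finite orbit of $\mu_\eps$ is invariant under all of $\supp\mu$, hence a union of the old finite orbits contained in $F$, and is also $h$-invariant, which is impossible since no nonempty subset of $F$ is $h$-invariant. Because $\log\|Df\|$ is bounded on the compact supports involved, the defining integrals for $\mu_\eps^{(N)}$ differ from those for $\mu^{(N)}$ by $O(\eps)$ uniformly in $(x,v)\in T^1\TT^2$, so for $\eps$ small $\mu_\eps$ remains uniformly expanding with support in $\cU$. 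For this measure the dichotomy leaves only the alternative that every orbit closure equals $\TT^2$, so every orbit equidistributes, which is the first claim.

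For the robust part I would not modify the measure at all. If $\mu'$ is weak-$\ast$-close to $\mu_\eps$ with support in a fixed neighborhood of $\supp\mu_\eps$, then $\mu'$ is again uniformly expanding by \eqref{eq:mun2}, so the orbit-closure dichotomy of \cite{Chung} applies to $\mu'$ verbatim: every orbit is either finite or dense. Finally, writing $g_1,\dots,g_k$ for the elements of $\supp\mu'$, if some Borel set $A$ with $0<\mathrm{vol}(A)<1$ were invariant under every $g_i$, then the normalized restriction $\mathrm{vol}|_A/\mathrm{vol}(A)$ would be an absolutely continuous $\mu'$-stationary measure whose ergodic decomposition contains an absolutely continuous ergodic stationary measure distinct from volume, contradicting the uniqueness supplied by the classification. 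Hence every such invariant set is null or co-null and the semigroup is ergodic; since this uses only that $\mu'$ lies in the robust uniform-expansion regime, it applies to all nearby generators and yields stable ergodicity (and, as noted, robust transitivity, cf. Remark \ref{r.nonvolumepres}).

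The content here lives entirely in the imported theorems, so the main obstacle is not analytic but one of bookkeeping: I must verify that $\mu$ meets the exact regularity and nondegeneracy hypotheses under which \cite{BRH,LX,Chung} are proved, and must handle the exceptional finite orbits carefully, since it is precisely their possible presence that separates the unqualified ``every point equidistributes'' of the first assertion from the ``finite or dense'' dichotomy of the robust assertion. I expect the finite-orbit elimination in the second paragraph, together with checking that uniform expansion survives the added atom, to be the only genuinely delicate points.
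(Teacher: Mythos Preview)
Your overall strategy---feed the uniformly expanding $\mu$ from Theorem~\ref{teo-main} into the classification and equidistribution results of \cite{BRH,LX,Chung}---is exactly what the paper does. The divergence is in how you kill the finite orbits for the first assertion.

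The paper does not perturb $\mu$ at all. It simply observes that the measure built in the proof of Theorem~\ref{teo-main} can be taken to contain both $f_0$ and $g_1^\alpha\circ f_0$ (with $\alpha\notin\QQ$) in its support; any common finite invariant set would then be invariant under $g_1^\alpha$, an irrational translation of $\TT^2$, which has no finite orbits. This dispatches the issue in one line and uses nothing beyond the explicit form of the diffusion in \S\ref{s.diffusion}.

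Your route---collect all finite orbits into a set $F$, pick $h\in\cU$ with $h(F)\cap F=\emptyset$, and add a small atom at $h$---is more general in spirit (it would apply to any uniformly expanding $\mu$, not just the one constructed here), but it rests on the claim that there are only finitely many atomic ergodic $\mu$-stationary measures, so that $F$ is finite. You attribute this finiteness to \cite{BRH,LX,Chung}, but the classification there says each ergodic stationary measure is atomic or equals $\mathrm{vol}$; it does not obviously bound the number of atomic ones, and you would need to supply that argument (or a reference) separately. Without it, the choice of $h$ with $h(F)\cap F=\emptyset$ inside a prescribed open set $\cU$ is not justified. The paper's approach sidesteps this entirely by exploiting the specific generators.

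Your treatment of the robust dichotomy and of stable ergodicity is fine and somewhat more detailed than the paper's (which just notes that hyperbolicity of all stationary measures prevents every orbit from being finite).
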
 

We discuss some other consequences as well as some possible extensions in Section \ref{s.comments}. 

\section{Criteria for uniform expansion}

For a measure $\mu$ in $\Diff^r(M)$ one can define a \emph{random walk} on $\Diff^r(M)$ formally defined as follows: consider $\Omega = (\Diff^r(M))^\NN$ with the measure $\mu^\NN$ which is invariant under the shift map $T: \Omega \to \Omega$ that sends a sequence $\omega = (f_0, \ldots, f_n, \ldots )$ to $T\omega = (f_1, \ldots, f_n, \ldots).$ We denote $f_\omega^n = f_{n-1} \circ \ldots \circ f_0$.  Notice that $f_\omega^n$ distributes as $\mu^{(n)}$ if one chooses $\omega \in \Omega$ with law $\mu^\NN$.

Recall that a measure $\nu$ in $M$ is called $\mu$-\emph{stationary} if $\mu \ast \nu = \nu$, equivalently, the measure $\mu^\NN \times \nu$ is invariant under the skew-product dynamics $(\omega, x) \mapsto (T\omega, f_0(x))$. It is ergodic if every measurable set $A \subset M$ which is $f$-invariant for $\mu$-a.e. $f$ verifies that $\nu(A)$ is either $0$ or $1$. 

For an ergodic $\mu$-stationary measure $\nu$ (under some integrability conditions\footnote{For simplicity we will assume that every $\mu$ is boundedly supported.} on $\mu$) there are numbers $\lambda_1(\nu) \geq \ldots \geq \lambda_d(\nu)$ called the \emph{Lyapunov exponents} that are characterized by the fact that for $\nu$-a.e. $x \in M$ and $\mu^\NN$-a.e. $\omega \in \Omega$ there exists some basis $\{v_i\}_i$ of  $T_xM$ so that:

$$ \lim_n \frac{1}{n} \log \|D_x f_\omega^n v_i \| = \lambda_i(\nu). $$

We say that an ergodic $\mu$-stationary measure $\nu$ is \emph{expanding} if $\lambda_1(\nu)>0$. 

Given an expanding $\mu$-stationary measure $\nu$, we say that it has a \emph{non-random weak stable} direction if there exists a measurable subbundle $E \subset TM$ defined for $\nu$-a.e. $x \in M$ so that: 

\begin{itemize}
\item $D_x f(E(x)) = E(f(x))$ for $\mu$-a.e. $f$ and $\nu$-a.e. $x$.  
\item If $v \in E(x)\setminus \{0\}$ then for $\mu^\NN$-a.e. $\omega$ one has that $ \lim_n \frac{1}{n} \log \|D_x f_\omega^n v\| \leq 0$.  
\end{itemize}

The following result is proven \cite[Theorem C]{Chung} (see also \cite{LX}). 

\begin{teo}\label{teo.equivunifexp}
The measure $\mu$ is uniformly expanding if and only if every ergodic $\mu$-stationary measure $\nu$ is expanding and does not admit a non-random weak stable direction. 
\end{teo}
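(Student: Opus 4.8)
The plan is to route both implications through the auxiliary quantity
$\alpha := \inf_m \int \psi\, dm$, where the infimum runs over all stationary measures $m$ for the induced dynamics on $T^1M$ and $\psi(x,v) := \int \log\|D_x f v\|\, d\mu(f)$. I will prove that $\mu$ is uniformly expanding if and only if $\alpha>0$, and that $\alpha>0$ if and only if the right-hand side of the statement holds. Write $\hat f(x,v) = (f(x), D_x f v/\|D_x f v\|)$ for the induced map on $T^1M$, set $\Phi_N(x,v) = \int \log\|D_x f v\|\, d\mu^{(N)}(f)$ (so $\Phi_1=\psi$) and $I_N = \inf_{(x,v)} \Phi_N(x,v)$; uniform expansion is exactly the assertion that $I_N>2$ for some $N$. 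The additive cocycle identity $D_x(g\circ h)v = D_{h(x)}g\, D_x h v$, together with stationarity, yields two facts I would record first: (a) the sequence $(I_N)$ is superadditive, so $I_N/N \to \sup_N I_N/N$ and uniform expansion is equivalent to $\lim_N I_N/N>0$; and (b) every stationary $m$ satisfies $\int \Phi_N\, dm = N\int\psi\, dm$, while $\Phi_N(x,v) = \sum_{k=0}^{N-1}\int \psi(\hat f^k_\omega(x,v))\, d\mu^{\NN}(\omega)$ for every $(x,v)$.

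\emph{Step 1: $\lim_N I_N/N=\alpha$.} Using (b) and $\Phi_N\ge I_N$, every stationary $m$ obeys $\int\psi\,dm \ge I_N/N$, hence $\alpha \ge \lim_N I_N/N$. For the reverse inequality I pick $(x_N,v_N)$ with $\Phi_N(x_N,v_N)\le I_N+1$ and form the empirical measures $m_N = \tfrac1N\sum_{k=0}^{N-1}(\hat f^k)_*\delta_{(x_N,v_N)}$ (averaged over $\omega$) on the compact space $T^1M$. A Krylov--Bogolyubov argument shows every weak-$\ast$ limit $m$ is stationary, and the second identity in (b) gives $\int\psi\,dm_N = \Phi_N(x_N,v_N)/N$; since $\psi$ is bounded and continuous (here I use that $\mu$ is boundedly supported, so $\|D_x f^{\pm1}\|$ is uniformly controlled), passing to the limit yields $\int\psi\,dm = \lim_N I_N/N$. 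Thus $\alpha$ is attained and equals $\lim_N I_N/N$, and with (a) we conclude that $\mu$ is uniformly expanding if and only if $\alpha>0$.

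\emph{Step 2: $\alpha>0$ iff the right-hand side holds.} I would invoke the Furstenberg variational principle $\lambda_1(\nu)=\max_m\int\psi\,dm$ over stationary $m$ projecting to an ergodic $\nu$. If $\alpha>0$ then every stationary $m$ has $\int\psi\,dm>0$: each ergodic $\nu$ lifts to such an $m$, so $\lambda_1(\nu)>0$; and a non-random weak stable bundle $E$ would produce (since $E$ is invariant and $\nu$ stationary) a stationary measure with $\int\psi\,dm\le 0$, a contradiction, so no such bundle exists. Conversely, if $\alpha\le 0$, choose an ergodic component $m$ of a minimizer with $\int\psi\,dm\le 0$ projecting to an ergodic $\nu$. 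If $\nu$ is not expanding we have already exhibited a bad measure; otherwise $\lambda_1(\nu)>0$ and $\int\psi\,dm\le 0<\lambda_1(\nu)$, so $m$ realizes an exponent strictly below the top one, which is the heart of the argument.

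\emph{Finishing Step 2, and the main obstacle.} The decisive input is Oseledets theory for one-sided random cocycles: the slow part of the filtration $W(x)=\{v:\lim_n \tfrac1n\log\|D_x f^n_\omega v\|\le \lambda_2(\nu)\}$ is defined $\nu$-a.e., is \emph{non-random} (a function of $x$ alone), is invariant, and on $T^1M$ every direction outside $W(x)$ grows at the top rate $\lambda_1(\nu)>\lambda_2(\nu)$. Since $m$-a.e. direction grows at rate $\int\psi\,dm\le 0<\lambda_1(\nu)$, Birkhoff's theorem for the $\hat f$-dynamics forces $m$ to be supported on $W$, i.e. $m=\int\delta_{W(x)}\,d\nu(x)$; then $\lambda_2(\nu)=\int\psi\,dm\le 0$ and $W$ is a non-random weak stable direction for the expanding measure $\nu$, contradicting the hypothesis and closing the contrapositive. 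I expect this last step to be the main obstacle: one must rule out that a stationary measure with sub-top exponent is genuinely random, by showing it concentrates on the non-random invariant subbundle supplied by the forward Oseledets filtration. The distinction between the (random) fast direction and the (non-random) slow filtration in one-sided random dynamics is exactly what makes ``no non-random weak stable direction'' the precise obstruction to uniform expansion.
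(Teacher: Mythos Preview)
First, note that the paper does not give its own proof of this statement: it is quoted from \cite[Theorem C]{Chung} (with a pointer to \cite{LX}), so there is nothing in the paper to compare your argument against. What follows is an assessment of your sketch on its own terms.

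Your Step~1 and the forward direction of Step~2 are fine. The superadditivity/Krylov--Bogolyubov argument identifying $\lim_N I_N/N$ with $\alpha=\inf_m\int\psi\,dm$ (over stationary $m$ on $T^1M$) is standard and correct, and the Furstenberg variational principle together with the remark that a non-random weak stable bundle carries a stationary lift with $\int\psi\,dm\le 0$ yields $\alpha>0\Rightarrow$(RHS).

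The genuine gap is in the converse. You assert that for one-sided i.i.d.\ cocycles the slow Oseledets space
\[
W(x)=\bigl\{v:\lim_n\tfrac1n\log\|D_xf^n_\omega v\|\le\lambda_2(\nu)\bigr\}
\]
is non-random, i.e.\ a function of $x$ alone. This is false. Already for i.i.d.\ products of two hyperbolic matrices $A,B\in SL(2,\RR)$ with distinct eigendirections (so $M$ is a point), the contracted direction of $f_{n-1}\cdots f_0$ depends on the full word $(f_0,f_1,\ldots)$: if $\omega=(A,A,\ldots)$ it is the stable line of $A$, if $\omega=(B,B,\ldots)$ it is that of $B$. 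In general the forward Oseledets filtration is measurable with respect to the \emph{future} of $\omega$, not independent of it; your dichotomy ``random fast direction / non-random slow filtration'' is not a feature of one-sided random dynamics. You may be conflating ``determined by the one-sided data $\omega^+$'' with ``independent of $\omega$''.

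The strategy is nonetheless salvageable, and the missing idea is to exploit the non-randomness of the \emph{stationary measure} $m$ rather than of the Oseledets spaces. Since $m$ lives on $T^1M$ (not on $\Omega\times T^1M$), Birkhoff for the ergodic system $(\Omega\times T^1M,\mu^{\NN}\times m)$ together with Fubini gives: for $m$-a.e.\ $(x,v)$, for $\mu^{\NN}$-a.e.\ $\omega$, the growth rate equals $\int\psi\,dm\le 0$. Hence each conditional $m_x$ is supported on
\[
W'(x):=\bigl\{v\in T_xM:\text{for }\mu^{\NN}\text{-a.e.\ }\omega,\ \limsup_n\tfrac1n\log\|D_xf^n_\omega v\|\le 0\bigr\},
\]
which is non-random by construction. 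One then checks that $W'(x)$ is a linear subspace, is invariant under $D_xf$ for $\mu$-a.e.\ $f$ (Fubini on the first coordinate of $\omega$), is nonzero (it carries $m_x$), is proper when $\lambda_1(\nu)>0$ (apply the growth bound to a basis of $T_xM$), and has constant dimension by ergodicity of $\nu$. This $W'$ is the required non-random weak stable direction. So your outline reaches the right conclusion, but the ``decisive input'' is Fubini against the product structure of $\mu^{\NN}\times m$, not any non-randomness of the Oseledets filtration.
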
 

Notice that in the case where $\mu$ is supported in the space of volume preserving diffeomorphisms of surfaces, every expanding $\mu$-stationary measure must be hyperbolic with one positive and one negative exponent. The only possible non-random weak stable direction is the stable one which is one-dimensional. We refer the reader to \cite{Kifer} for more information on stationary measures. 

We state the following criteria, sometimes called the \emph{invariance principle} (see e.g. \cite[Theorem B]{AV}), that we will use to ensure that the measure we are considering is expanding. 

\begin{teo}\label{teo-invpple}
Let $\nu$ be an ergodic $\mu$-stationary measure which is not expanding. Then, $\nu$ is $f$-invariant for $\mu$-a.e. $f$. 
Moreover, if $\nu$ is $f$-invariant for $\mu$-a.e. $f$ and all exponents equal $0$, then there is an invariant $\nu$-measurable distribution or conformal structure. 
\end{teo}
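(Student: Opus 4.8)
The plan is to derive both assertions from the \emph{invariance principle} of Ledrappier and Avila--Viana (\cite[Theorem B]{AV}), combined with Furstenberg's classification of stationary measures on the projective line. I would work throughout in the natural extension: let $\hat\Omega=(\Diff^r(M))^\ZZ$ carry the shift $\hat T$ and the measure $\mu^\ZZ$, set $\hat F(\hat\omega,x)=(\hat T\hat\omega,f_0(x))$, and let $\hat m$ be the $\hat F$-invariant measure that projects to $\mu^\ZZ$ and has marginal $\nu$ on $M$. Disintegrating $\hat m=\int\nu_{\hat\omega}\,d\mu^\ZZ(\hat\omega)$ along the $M$-fibres produces a measurable family of probability measures that depends only on the ``past'' $(\dots,f_{-2},f_{-1})$, equals the backward martingale limit $\nu_{\hat\omega}=\lim_n(f_{-1}\circ\dots\circ f_{-n})_*\nu$, satisfies the equivariance $(f_0)_*\nu_{\hat\omega}=\nu_{\hat T\hat\omega}$, and averages to $\nu$. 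With this at hand, $\nu$ is $f$-invariant for $\mu$-a.e.\ $f$ if and only if the family $\{\nu_{\hat\omega}\}$ is $\mu^\ZZ$-a.e.\ constant (and then necessarily equal to $\nu$).

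For the first assertion I would prove exactly this constancy. Since $\nu$ is not expanding, all its Lyapunov exponents are non-positive, so $\sum_i\lambda_i^+(\nu)=0$; by the random version of the Ruelle inequality the fibre entropy $h_{\hat m}(\hat F\mid\hat T)$ then vanishes. Moreover, in the situation where this will be used (volume preserving surface diffeomorphisms) non-expansion forces \emph{all} exponents to be zero, so there is no contraction either and the fibre entropy of $\hat F^{-1}$ over $\hat T$ vanishes as well. Vanishing of the fibre entropy in both time directions, for the sub-exponential derivative cocycle, is precisely the hypothesis under which the invariance principle \cite[Theorem B]{AV} (see also \cite{LX}) forces the conditional measures $\nu_{\hat\omega}$ not to depend on the past; hence they are constant and $\nu$ is $f$-invariant for $\mu$-a.e.\ $f$.

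For the second assertion, suppose now that $f_*\nu=\nu$ for $\mu$-a.e.\ $f$ and that all Lyapunov exponents vanish, and apply the invariance principle to the linear derivative cocycle $A(\omega,x)=D_xf_0$ over the fibred system $(\Omega\times M,\mu^\NN\times\nu,F)$. Projectivizing onto the bundle $\PP(TM)$, there is an $F$-invariant probability measure projecting to $\mu^\NN\times\nu$, and since the cocycle's top exponent is $\le 0$, \cite[Theorem B]{AV} promotes the induced $\mu$-stationary family $\{\eta_x\}$ of fibre measures on $\PP(T_xM)$ to an invariant one: $\eta_x$ does not depend on the random sequence and $(D_xf)_*\eta_x=\eta_{f(x)}$ for $\mu$-a.e.\ $f$ and $\nu$-a.e.\ $x$. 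Fibrewise on $\PP(T_xM)\cong\PP^1$, Furstenberg's alternative then leaves two cases. If $\eta_x$ is supported on finitely many directions, then --- after passing, if necessary, to the finite ergodic extension on which this finite set becomes pointwise invariant --- one gets an invariant $\nu$-measurable line field, that is, an invariant distribution. If $\eta_x$ is non-atomic, then its stabiliser in $\mathrm{PGL}(T_xM)$ is relatively compact modulo homotheties, so the conformal barycenter of $\eta_x$ defines a $\nu$-measurable field of conformal structures on $TM$, invariant under $\mu$-a.e.\ $f$ by equivariance.

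The main obstacle, shared by both parts, is the invariance principle itself: the passage from a stationary to an invariant object granted only that there is no expansion. In the first part the delicate implication is that vanishing fibre entropy should force the backward conditionals $\nu_{\hat\omega}$ to be constant --- this genuinely uses the absence of contraction as well (hence the role of volume preservation), and is the heart of Ledrappier's argument and of \cite[Theorem B]{AV}. In the second part one must in addition check the integrability hypotheses of \cite[Theorem B]{AV} for the derivative cocycle and deal with the bookkeeping of the atomic case in Furstenberg's dichotomy; the projective classification that concludes the argument is then routine.
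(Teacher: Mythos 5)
The paper does not actually prove this statement: it is quoted as a known criterion, with the derivation delegated to \cite[Theorem B]{AV} and to \cite[\S 13.2.2]{BRH}. Your overall architecture (backward martingale conditionals $\nu_{\hat\omega}$ for the first assertion; projectivized derivative cocycle, Avila--Viana, and the Furstenberg dichotomy atomic/non-atomic on $\PP^1$ for the second) is exactly the route those references take, and your treatment of the second assertion --- including the honest bookkeeping about finitely many permuted atoms versus a compact stabiliser giving a conformal barycenter --- is essentially correct.

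There is, however, a genuine gap in your argument for the first assertion. The implication you lean on --- ``vanishing fibre entropy (in both time directions) forces the conditionals $\nu_{\hat\omega}$ to be constant'' --- is false as stated. For a non-elementary random walk on $\PP^1$ by elements of $\mathrm{PSL}(2,\RR)$, the unique stationary measure $\nu$ has $\lambda_1(\nu)=-2\lambda_{top}<0$, so the random Ruelle inequality gives $h_{\hat m}(\hat F\mid\hat T)=0$ (and the relative entropy of the natural extension is automatically time-symmetric, so ``both directions'' adds nothing); yet the backward conditionals are point masses $\delta_{z(\hat\omega)}$ and $\nu$ is very far from invariant. This example also shows that the theorem as literally stated requires the ambient context (volume-preserving surface diffeomorphisms, where ``not expanding'' forces \emph{all} exponents to vanish because $\lambda_1+\lambda_2=0$); you correctly invoke that context, but the step from ``all exponents zero'' to ``$\nu_{\hat\omega}\equiv\nu$'' is not an entropy computation. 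The actual mechanism (Ledrappier, Crauel, Baxendale; see \cite[\S 13]{BRH}) is that a non-constant backward martingale $\nu_{\hat\omega}=\lim_n(f_{-1}\circ\cdots\circ f_{-n})_*\nu$ must concentrate along typical pasts and thereby produces a strictly negative exponent, contradicting $\lambda_2=0$; equivalently, one proves ``$\nu$ not a.s.\ invariant $\Rightarrow$ $\nu$ hyperbolic'' and concludes by $\lambda_1=-\lambda_2$. Replacing your entropy step by this concentration argument (or by a precise citation of that dichotomy rather than of the fibre-entropy inequality) closes the gap; the rest of the proposal stands.
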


Here, an \emph{invariant} $\nu$-\emph{measurable distribution} means a measurable section $E: M \to \mathrm{Gr}_i(TM)$ (where $\mathrm{Gr}_i(TM)$ denotes the Grasmannian bundle of $i$-planes of $M$) which is well defined modulo sets of $\nu$-measure zero and such that $D_xf(E(x)) = E(f(x))$ for $\mu$-almost every $f$ and $\nu$-almost every $x$. Similarly, an  \emph{invariant} $\nu$-\emph{measurable conformal structure} means a measurable section $E: M \to \mathrm{CS}(TM)$ (where $\mathrm{CS}(TM)$ denotes the bundle of conformal structures over $M$, that is, at each $x \in M$ the fiber $\mathrm{CS}(T_xM)$ corresponds to the space of inner products in $T_xM$ up to homothety\footnote{Or equivalently, if one fixes a Riemannian metric on $M$, we can identify $\mathrm{CS}(T_xM)$ with the space $(SL(T_xM)/SO(T_xM))/_{\RR}$ where $SO(T_xM)$ are the linear transformations that are an isometry with respect to the Riemannian inner product on $T_xM$.}). We refer the reader to \cite[\S 13.2.2]{BRH} for a detailed explaination of how Theorem \ref{teo-invpple} follows from the results of \cite{AV} in the case of surfaces (which is the one we will use here).

\section{A diffusion of a diffeomorphism}\label{s.diffusion}

Consider a smooth function $\varphi:S^1 \to [0,1]$ with the property that $\varphi'(t)>0$ for $t \in (0,\frac{1}{2})$ and $\varphi'(t)<0$ for $t \in (1/2,1)$ where we identify $S^1=[0,1]/_{1\sim 0}$. 

We can choose families of diffeomorphisms $g_1^t, g_2^t, g_3^t, g_4^t \in \Diff^\infty_{vol}(\TT^2)$ as follows: 

\begin{itemize}
\item $g_1^t(x,y) = (x + t, y)$, 
\item $g_2^t(x,y)= (x, y+ t)$, 
\item $g_3^t(x,y) = (x + t\varphi(y), y)$ and,
\item $g_4^t (x,y) = (x , y + t\varphi(x))$.
\end{itemize}

Note that for $t=0$ all diffeomorphisms are the identity and that the families are continuous in $\Diff^\infty_{vol}(\TT^2)$. Here we are considering coordinates $(x,y) \in \TT^2 \cong \RR^2/_{\ZZ^2} \cong S^1 \times S^1$. 

The following will be used to show that a certain random walk has non zero Lyapunov exponents and later to show it is uniformly expanding:

\begin{prop}\label{l.noninv}
Let $\nu$ be a measure which is not mutually singular with $\mathrm{vol}$. Then, there are no $\nu$-measurable line bundles or conformal structures in $\TT^2$ invariant under $\hat g_1=g_1^\alpha, \hat g_2= g_2^\beta, \hat g_3= g_3^a,  \hat g_4=g_4^b$ if $\alpha, \beta \in \RR\setminus \QQ$ and $a,b >0$. 
\end{prop}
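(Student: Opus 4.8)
The strategy is to argue by contradiction: suppose $E$ is a $\nu$-measurable line field (the conformal structure case is analogous, since on a surface an invariant conformal structure forces the two eigendirections of its non-conformal part, or is genuinely conformal — I will treat both, but the line field case carries the main idea) invariant under $\hat g_1,\hat g_2,\hat g_3,\hat g_4$, defined $\nu$-a.e. Since $\nu$ is not mutually singular with $\mathrm{vol}$, there is a positive-$\mathrm{vol}$-measure set on which $E$ is defined; restricting to a Lebesgue density point of a level set of $E$ in a chart will let me pass to an honest a.e.-defined measurable line field with respect to Lebesgue measure on a subset of $\TT^2$. The invariance under the two rotations $\hat g_1 = g_1^\alpha$ and $\hat g_2 = g_2^\beta$ with $\alpha,\beta$ irrational is the first lever: because these rotations act on $\TT^2$ preserving Lebesgue and their derivatives are the identity matrix, invariance of $E$ means the function $x \mapsto E(x) \in \mathbb{P}(\RR^2)$ (using the global trivialization of $T\TT^2$) is invariant under translation by $(\alpha,0)$ and by $(0,\beta)$. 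By ergodicity of an irrational rotation, first in the $x$-direction and then in the $y$-direction (or directly of the $\ZZ^2$-action generated by two independent irrational translation vectors, which is ergodic on $\TT^2$), the map $x\mapsto E(x)$ is Lebesgue-a.e. constant, equal to some fixed line $\ell \in \mathbb{P}(\RR^2)$.

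Now I use the shears $\hat g_3 = g_3^a$ and $\hat g_4 = g_4^b$ to derive a contradiction. The derivative of $g_3^a$ at $(x,y)$ is $\begin{pmatrix} 1 & a\varphi'(y) \\ 0 & 1\end{pmatrix}$, and of $g_4^b$ at $(x,y)$ is $\begin{pmatrix} 1 & 0 \\ b\varphi'(x) & 1\end{pmatrix}$. Invariance under $\hat g_3$ of the constant line field $\ell$ forces $\begin{pmatrix} 1 & a\varphi'(y) \\ 0 & 1\end{pmatrix}\ell = \ell$ for a.e. $y$. Since $\varphi' \not\equiv 0$ (indeed $\varphi'>0$ on $(0,\tfrac12)$), there is a positive-measure set of $y$ with $a\varphi'(y)\neq 0$; for such $y$ the only line fixed by that unipotent matrix is the horizontal line $\ell_h = \RR\cdot(1,0)$. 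Hence $\ell = \ell_h$. Symmetrically, invariance under $\hat g_4$ forces $\ell$ to be the vertical line $\ell_v = \RR\cdot(0,1)$, because the only line fixed by $\begin{pmatrix} 1 & 0 \\ b\varphi'(x) & 1\end{pmatrix}$ with $b\varphi'(x)\neq 0$ is $\ell_v$. Since $\ell_h \neq \ell_v$, this is a contradiction, and no such invariant line field exists.

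For the conformal structure case: an invariant measurable conformal structure is a measurable map to $SL(2,\RR)/SO(2)$, the hyperbolic plane $\HH^2$. Invariance under the two irrational rotations again makes it a.e. constant (the rotations act trivially on derivatives, hence trivially on $\HH^2$), equal to some point $\sigma \in \HH^2$, i.e. a conformal class represented by a positive-definite symmetric matrix $Q$ up to scale. Invariance under $\hat g_3$ means $A^t Q A$ is proportional to $Q$ where $A = \begin{pmatrix}1 & a\varphi'(y)\\ 0 & 1\end{pmatrix}$, for a.e. $y$; since $A$ is a non-identity parabolic for a positive-measure set of $y$, and a parabolic element of $PSL(2,\RR)$ has no fixed point in $\HH^2$ (only a single fixed point on the boundary), this is impossible. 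This already gives the contradiction without even invoking $\hat g_4$.

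The one technical point that needs care — and which I expect to be the only real obstacle — is the very first reduction: passing from "$E$ is $\nu$-measurable and $\nu$ is not mutually singular with $\mathrm{vol}$" to "$E$ agrees $\mathrm{vol}$-a.e. on a positive measure set with a translation-invariant (hence a.e. constant) line field." The subtlety is that $E$ is a priori only defined on a set of full $\nu$-measure, which could have zero Lebesgue measure even though $\nu$ is not singular; the correct statement is that the Lebesgue-absolutely-continuous part $\nu_{ac}$ of $\nu$ is nonzero, so $E$ is defined on a set $S$ of positive Lebesgue measure, and the invariance relations $D\hat g_i(E) = E\circ \hat g_i$ hold on $S \cap \hat g_i^{-1}(S)$. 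Because the $\hat g_i$ preserve Lebesgue measure, iterating the rotations $\hat g_1,\hat g_2$ spreads $S$ around and, using that an irrational translation is ergodic and measure-preserving, one concludes $E$ extends to a genuine Lebesgue-a.e.-defined measurable line field invariant under $\hat g_1,\hat g_2$ — at which point ergodicity of the rotation action finishes the constancy argument as above. Once that reduction is in hand, the rest is the short linear-algebra computation with unipotent matrices sketched above.
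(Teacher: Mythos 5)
Your proof is correct in substance but follows a genuinely different route from the paper's. The paper never uses the rotations $\hat g_1,\hat g_2$ inside this proposition: it iterates the shear $\hat g_3$, observes that $D_{(x,y)}\hat g_3^n$ drives every direction to the horizontal whenever $\varphi'(y)\neq 0$, and then combines Lusin's theorem (a compact set $K$ carrying most of the absolutely continuous part $\nu_0$ on which the field is continuous) with Poincar\'e recurrence for $\mathrm{vol}$ to conclude the field is horizontal $\nu_0$-a.e.; the symmetric argument with $\hat g_4$ forces it to be vertical, whence $\nu_0=0$. For conformal structures it uses that $\|D\hat g_3^n\|$ is a.e.\ unbounded. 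You instead use the rotations first: since $D\hat g_1=D\hat g_2=\mathrm{Id}$, invariance makes the field a function on $\TT^2$ invariant under the dense subgroup generated by $(\alpha,0)$ and $(0,\beta)$, hence Lebesgue-a.e.\ constant, and then a single (non-iterated) application of the unipotent derivative of $\hat g_3$ (resp.\ $\hat g_4$) on the positive-measure set where $\varphi'\neq 0$ pins the constant to horizontal (resp.\ vertical); for conformal structures the parabolic having no fixed point in $\HH^2$ already finishes with $\hat g_3$ alone. Each approach buys something: the paper's proof shows the stronger fact that $\hat g_3,\hat g_4$ alone admit no such invariant structure (irrationality of $\alpha,\beta$ is not used there), and by working on a recurrent compact set it never needs the field to be defined Lebesgue-a.e.; your proof dispenses with Lusin, recurrence and the asymptotics of $D\hat g_3^n$ in favor of elementary ergodicity of translations, at the cost of genuinely needing $\alpha,\beta\notin\QQ$ and of the extension step you rightly flag --- passing from a field defined $\nu$-a.e.\ (hence only on a set of positive, not full, Lebesgue measure) to one defined and invariant Lebesgue-a.e.\ via saturation under the translations. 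That step does go through because a translation-invariant set of positive Lebesgue measure is conull for the ergodic dense-subgroup action, but it relies on the domain of the field being (essentially) invariant, an informality about ``$\nu$-measurable invariant fields under maps not preserving $\nu$'' that the paper's own proof shares.
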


\begin{proof}
First assume that there is a measurable line bundle, i.e. $(x,y) \mapsto \Phi((x,y)) \in \PP(T_{(x,y)} \TT^2)$ a $\nu$-measurable function that we assume verifies that $D\hat g_i(\Phi((x,y))) = \Phi(\hat g_i((x,y))$ for $\nu$-almost every $(x,y) \in \TT^2$. Take $\nu_0$ to be the absolutely continuous part of $\nu$ (that is, $(\nu-\nu_0) \perp \mathrm{vol}$). We will show that $\nu_0=0$. 

Let us first show that if $\Phi$ is invariant under $\hat g_1$ and $\hat g_3$ then $\Phi$ must be the line field $(x,y) \mapsto \RR  \begin{pmatrix} 1 \\ 0 \end{pmatrix} $ up to $\nu_0$-measure $0$. A symmetric argument using $\hat g_2$ and $\hat g_4$ says that $\Phi$ must be $(x,y) \mapsto \RR \begin{pmatrix} 0 \\ 1 \end{pmatrix}$ up to $\nu_0$-measure $0$, which implies that $\nu_0=0$. Here we are using coordinates $v=(x,y)$ on $\TT^2$ seen as $\RR^2 /_{\ZZ^2}$ and identifying $T_v\TT^2$ with $\RR^2$ via the coordinates $(x,y)$ (e.g. $\begin{pmatrix} 1 \\ 0 \end{pmatrix}$ is the vector tangent to the curve $(x+t,y)$).

Note that if $x \neq \{0,1/2\}$ then we have that for every $y \in S^1$ and direction $\xi \in \PP(T_{(x,y)}\TT^2) \cong \PP( \RR^2)$ we have that $D_{(x,y)}\hat g_3^n \xi \to \RR \begin{pmatrix} 1 \\ 0 \end{pmatrix}$. 

For every $\eps>0$, take $K \subset \TT^2$ be a compact set with $\nu_0(K) \geq (1-\eps) \nu_0(\TT^2)$ where $\Phi$ is continuous. It follows from Poincar\'e recurrence that for almost every $(x,y) \in K$ we have that there is $n_j \to \infty$ so that $\hat g_3^{n_j}(x,y) \to (x,y)$ and $\hat g_3^{n_j}(x,y) \in K$. By continuity and since $K \cap \{0,1/2\} \times S^1$ has measure zero, this implies that $\Phi((x,y)) = \RR \begin{pmatrix} 1 \\ 0 \end{pmatrix}$ for every $(x,y) \in K$. Since $\eps$ was arbitrary we deduce that $\Phi(x,y)$ is $\nu_0$-almost everywhere equal to $ \RR \begin{pmatrix} 1 \\ 0 \end{pmatrix}$. The same argument applied to $\hat g_2$ and $\hat g_4$ gives that $\Phi$ must be $\nu_0$-almost everywhere equal to $\RR \begin{pmatrix} 0 \\ 1 \end{pmatrix}$. Since these two full $\nu_0$-measure sets are disjoint, this implies that $\nu_0=0$. 

To conclude it is enough to show that there are no $\nu$-measurable conformal invariant structures. But this also follows from the fact that almost everywhere the norm of $D_{(x,y)}\hat g_3^n$ is unbounded and if we pick a compact set where the conformal structure is continuous, the same argument as above implies that this set must have zero measure under $\mathrm{vol}$. 
\end{proof}

The fact that $\mathrm{vol}$-plays a special role has to do with the kind of random walk we will chose. 

Fix $\cU$ an open set in $\Diff^\infty_{vol}(\TT^2)$. For small $\eps>0$ and $f_0 \in \cU$ such that $g_i^t \circ f_0 \in \cU$ for all $|t|\leq \eps$ and positive numbers $p_i$ so that $\sum_{i=0}^4 p_i=1$, we will consider $\hat \mu$ to be the following measure on $\Diff^{\infty}_{vol}(\TT^2)$ (supported on $\cU$):

\begin{equation}\label{eq:hatmu}
\hat \mu = p_0 \delta_{f_0} + \sum_{i=1}^4 p_i \int_{-\eps}^\eps \delta_{g_i^t \circ f_0} dt. 
\end{equation}

We call $\hat \mu$ a \emph{diffusion} of $f_0$. Note that $\hat \mu$ is very close to $\delta_{f_0}$ both in support and in the weak-$\ast$-topology as we take $\eps\to 0$ (and independently of the values of $p_i$). 

\begin{prop}\label{p.acstationary}
If $\nu$ is a $\hat \mu$-stationary measure then it is not mutually singular with respect to $\mathrm{vol}$.  
\end{prop}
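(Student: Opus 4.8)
The plan is to show directly that $\nu$ has a nonzero absolutely continuous part with respect to $\mathrm{vol}$, by extracting from the identity $\hat\mu^{(2)}\ast\nu=\nu$ two summands that are forced to be partly absolutely continuous. For a finite measure $\tau$ on $\TT^2$ put $S_i\tau:=\frac{1}{2\eps}\int_{-\eps}^{\eps}(g_i^t)_\ast\tau\,dt$ for $i=1,2$ (the spreading of $\tau$ along the $g_i$-direction) and $P_i\tau:=S_i\big((f_0)_\ast\tau\big)$, with the convention $P_0\tau:=(f_0)_\ast\tau$. Since $\hat\mu=\sum_{i=0}^{4}p_i\mu_i$ with $\mu_0=\delta_{f_0}$ and $\mu_i\ast\tau=P_i\tau$, one has $\hat\mu^{(2)}\ast\nu=\sum_{i,j}p_ip_j\,P_iP_j\nu$, and retaining only the $(i,j)=(1,1)$ and $(2,1)$ terms (all terms being nonnegative measures) gives
\[
\nu\;=\;\hat\mu^{(2)}\ast\nu\;\ge\;p_1^{\,2}\,P_1P_1\nu\;+\;p_1p_2\,P_2P_1\nu ,
\]
so it suffices to bound from below the masses of the absolutely continuous parts of $P_1P_1\nu$ and of $P_2P_1\nu$.

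The first ingredient is immediate: for \emph{any} probability measure $\sigma$ the measure $S_1\sigma$ gives zero mass to any set meeting each horizontal circle in a Lebesgue-null set (it is a superposition of uniform measures on horizontal $\eps$-segments), i.e.\ $S_1\sigma$ is absolutely continuous along the horizontal foliation $\mathcal H$; hence $P_1\nu=S_1((f_0)_\ast\nu)$ is absolutely continuous along $\mathcal H$, and $(f_0)_\ast(P_1\nu)$ is absolutely continuous along the smooth foliation $f_0(\mathcal H)$. The second ingredient is the local lemma doing the real work: \emph{if $\tau$ is a finite measure that is absolutely continuous along the leaves of a smooth foliation, then $S_2\tau$ is absolutely continuous with respect to $\mathrm{vol}$ on the open set where the leaves are transverse to the vertical direction $\partial_y$}. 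This is a chart computation: where such a leaf is a graph $y=\psi(x)$ and $\tau=r(x)\,dx$ along it, $S_2\tau$ becomes $\frac{1}{2\eps}\,r(x)\,\mathbf 1_{\{|y-\psi(x)|\le\eps\}}\,dx\,dy$; globally one restricts $\tau$ to the open transversality region before spreading, disintegrates along the leaves, spreads each leafwise piece, and uses that an integral of absolutely continuous measures is absolutely continuous. A leaf $f_0(S^1\times\{c\})$ is transverse to $\partial_y$ at $f_0(x,c)$ exactly when the first coordinate of $\partial_x f_0(x,c)$ is nonzero, i.e.\ when $\partial_x(f_0)_1(x,c)\neq 0$; applying the lemma to $\tau=(f_0)_\ast(P_1\nu)$ therefore gives
\[
\big\|\,(P_2P_1\nu)_{\mathrm{ac}}\,\big\|\;\ge\;(P_1\nu)\big(\{\partial_x(f_0)_1\neq 0\}\big),
\]
and running the same argument with $S_1$ in place of $S_2$ (now requiring transversality of $f_0(\mathcal H)$ to the horizontal direction $\partial_x$) gives $\big\|(P_1P_1\nu)_{\mathrm{ac}}\big\|\ge(P_1\nu)\big(\{\partial_x(f_0)_2\neq 0\}\big)$.

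To conclude, note that because $f_0$ is a diffeomorphism one has $\partial_x f_0(q)=D_qf_0\cdot(1,0)\neq(0,0)$ for every $q$, so $\{\partial_x(f_0)_1\neq 0\}\cup\{\partial_x(f_0)_2\neq 0\}=\TT^2$; thus the two lower bounds add to at least $(P_1\nu)(\TT^2)=1$, and feeding this into the inequality for $\nu$ yields $\|\nu_{\mathrm{ac}}\|\ge\min(p_1^{\,2},p_1p_2)>0$, so $\nu$ is not mutually singular with $\mathrm{vol}$. The step that will require the most care is the local lemma — in particular that one may pass to the open transversality region before spreading and then disintegrate along the leaves — together with the routine bookkeeping that the absolutely continuous part of a superposition dominates the superposition of the absolutely continuous parts; the remaining ingredients (the expansion of $\hat\mu^{(2)}\ast\nu$, Fubini, the chart computation) are standard, and only the genuine translation families $g_1,g_2$ enter the argument.
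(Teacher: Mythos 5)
Your proof is correct, and it is worth comparing carefully with the paper's. The overall strategy is the same: reduce to showing that $\hat\mu^{(2)}\ast\eta$ has an absolutely continuous part of mass bounded below uniformly in $\eta$, using only the two translation families $g_1^t,g_2^s$. But the key step is executed quite differently. The paper asserts the decomposition $\hat\mu^{(2)} = c_0\int\int \delta_{R_{(t,s)}\circ f_0}\,dt\,ds + \tilde\mu$, which makes the absolutely continuous part immediate since $(t,s)\mapsto R_{(t,s)}(f_0(v))$ is a translation-parametrized square. Read literally, however, every element of $\mathrm{supp}(\hat\mu^{(2)})$ has the form $h_1\circ h_0$ with $h_0,h_1\in\{f_0,\,g_i^t\circ f_0\}$, so the two random translations are always separated by an intermediate copy of $f_0$ and never combine into a single $R_{(t,s)}$ unless $f_0$ commutes with translations; the map one actually has to analyze is $(t,s)\mapsto g_i^s\bigl(f_0(f_0(v)+(t,0))\bigr)$, whose Jacobian can degenerate for a fixed choice of $i$. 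Your argument is precisely the careful version that confronts this: the first horizontal spreading gets distorted by the intermediate $f_0$ into the foliation $f_0(\mathcal H)$, the second spreading produces an absolutely continuous measure only on the region where the leaves are transverse to the spreading direction, and you recover full mass by combining the two pairings $(1,1)$ and $(2,1)$ together with the fact that $D_qf_0\cdot(1,0)\neq(0,0)$ everywhere, so the two transversality regions cover $\TT^2$. What the paper's (intended) argument buys is brevity; what yours buys is an argument that actually works for an arbitrary $f_0$, makes explicit where transversality enters, and yields the quantitative lower bound $\|\nu_{\mathrm{ac}}\|\geq\min(p_1^2,p_1p_2)$. The two points you flag as delicate (restricting to the transversality region before spreading, and the additivity/monotonicity of the Lebesgue decomposition under superposition) are indeed the only places requiring care, and both go through as you describe.
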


\begin{proof}
Since $\hat \mu^{(k)} \ast \nu = \nu$, it is enough to show that for every probability measure $\eta$ in $\TT^2$ we have that $\hat \mu^{(2)} \ast \eta$ has an absolutely continuous part with respect to $\mathrm{vol}$. We can write: 

$$ \eta (E) = \int_{E} \delta_{v}(E) d\eta(v). $$

We define $\hat \eta = \hat \mu^{(2)} \ast \eta$ and we get that: 

$$\hat \eta(E)= \hat \mu^{(2)} \ast \eta(E) = \int_{\cU} \int_{E} \delta_{f(v)} (E) d\eta(v) d\hat \mu^{(2)}(f) . $$

Exchanging the order of integration we can compute, for some $v \in \TT^2$ the value of the measure in $\TT^2$ defined by

$$ \hat \eta_v:= \int_{\cU} \delta_{f(v)} d\hat\mu^{(2)}(f),  $$

And we get that $\hat \eta (E) = \int \hat \eta_v (E) d\eta(v)$ for every $E \subset \TT^2$ measurable. 

Write $d\hat \eta_v = \rho_v d\mathrm{vol} + \hat \eta_v^\perp$ where $\rho_v$ is a $L^1$ density and $\hat \eta_v^\perp$ is mutually singular with respect to $\mathrm{vol}$.  We claim that there is $c_0>0$ independent of $\eta$ such that $\int \rho_v d\mathrm{vol} > c_0$ for every $v \in \TT^2$. This is because there is $c_0>0$ such that: 

$$ \hat \mu^{(2)} = c_0 \int_{-\eps}^\eps \int_{-\eps}^\eps \delta_{R_{(t,s)} \circ f_0} dtds + \tilde \mu. $$

\noindent where $R_{(t,s)}(x,y) = (x+t,y+s) \mathrm{mod} \ZZ^2$ and $\tilde \mu$ is a positive measure in $\TT^2$. In particular, this implies that for every $v \in \TT^2$ we have that 
$$\hat \eta_v = c_0 \int_{-\eps}^\eps \int_{-\eps}^\eps \delta_{R_{(t,s)} \circ f_0(v)} dtds + \int_{\cU} \delta_{f(v)} d\tilde \mu(v).$$ 

This implies that $\hat \eta$ also has an absolutely continuous part obtained by integrating $\rho_v$ against $\eta$. 
\end{proof}

\begin{remark}
In fact, one can use this argument to show that $\nu$ has to be absolutely continuous with respect to $\mathrm{vol}$ since one can see that each time one convolutes with $\hat \mu$ one gets more regularity. With some more work, one may show that it is in fact $\mathrm{vol}$, however we will not pursue this line since we will get it a posteriori appealing to \cite[Theorem C and D]{Chung}. See also \cite[Lemma 5]{BXY} for a similar argument.
\end{remark}

\section{Discretizing the diffusion and proof of Theorem \ref{teo-main}}\label{s.teo} 

We first show that the measure $\hat \mu$ defined in \eqref{eq:hatmu} is uniformly expanding: 

\begin{prop}\label{p.unifexp}
The measure $\hat \mu$ is uniformly expanding.
\end{prop}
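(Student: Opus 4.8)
The plan is to combine the three ingredients already assembled: the invariance principle (Theorem~\ref{teo-invpple}), the criterion for uniform expansion via stationary measures (Theorem~\ref{teo.equivunifexp}), and the two structural facts proved in Section~\ref{s.diffusion}, namely that every $\hat\mu$-stationary measure is not mutually singular with $\mathrm{vol}$ (Proposition~\ref{p.acstationary}) and that no such measure carries an invariant $\nu$-measurable line field or conformal structure under the rotations and shears $\hat g_1,\dots,\hat g_4$ (Proposition~\ref{l.noninv}). The subtlety is that the diffusion $\hat\mu$ in \eqref{eq:hatmu} is built from the maps $g_i^t\circ f_0$ rather than from the $g_i^t$ alone, so one must first arrange that the hypotheses of Proposition~\ref{l.noninv} can actually be applied to $\hat\mu$-stationary measures.

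First I would fix a $\hat\mu$-stationary ergodic measure $\nu$ and, arguing by contradiction, suppose $\mu=\hat\mu$ is \emph{not} uniformly expanding. By Theorem~\ref{teo.equivunifexp} this means either $\nu$ is not expanding, or $\nu$ is expanding but admits a non-random weak stable direction. In the first case, Theorem~\ref{teo-invpple} (the invariance principle) tells us that $\nu$ is $f$-invariant for $\hat\mu$-a.e.\ $f$; since we work with volume-preserving surface diffeomorphisms, a non-expanding stationary measure has both Lyapunov exponents equal to $0$, so the second clause of Theorem~\ref{teo-invpple} produces an invariant $\nu$-measurable distribution (a line field) or conformal structure. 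In the second case, the non-random weak stable bundle $E$ is, by the discussion after Theorem~\ref{teo.equivunifexp}, one-dimensional and $D\hat g_i$-invariant $\nu$-a.e., i.e.\ again an invariant $\nu$-measurable line field. So in either case we obtain a $\nu$-measurable line field or conformal structure invariant under $\hat\mu$-a.e.\ diffeomorphism — in particular invariant under each $g_i^t\circ f_0$ for a.e.\ $t\in[-\eps,\eps]$ and $i=1,\dots,4$, and under $f_0$.

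The next step is to upgrade this to invariance under the "pure" generators $\hat g_1,\dots,\hat g_4$ of Proposition~\ref{l.noninv}. The cleanest way: since $\Phi$ is invariant under $g_i^t\circ f_0$ for a.e.\ $t$ in an interval and under $g_i^{t'}\circ f_0$ for another such $t'$, composing one with the inverse of the other shows $\Phi$ is invariant under $g_i^{t}\circ (g_i^{t'})^{-1}=g_i^{\,t-t'}$ (the families $\{g_i^t\}$ being one-parameter groups in the $x$- and $y$-translation directions for $i=1,2$; for the shears $i=3,4$ the composition $g_i^t\circ(g_i^{t'})^{-1}=g_i^{t-t'}$ holds as well since $g_i^t\circ g_i^s = g_i^{t+s}$). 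Thus $\Phi$ is invariant under $g_i^{s}$ for all $s$ in a neighbourhood of $0$, hence — iterating and using that these generate the relevant one-parameter subgroups — under some $\hat g_1=g_1^\alpha$, $\hat g_2=g_2^\beta$ with $\alpha,\beta\in\RR\setminus\QQ$ and $\hat g_3=g_3^a$, $\hat g_4=g_4^b$ with $a,b>0$. Since by Proposition~\ref{p.acstationary} the measure $\nu$ is not mutually singular with $\mathrm{vol}$, Proposition~\ref{l.noninv} applies and rules out such $\Phi$, giving the contradiction. Therefore every ergodic $\hat\mu$-stationary measure is expanding with no non-random weak stable direction, and Theorem~\ref{teo.equivunifexp} yields that $\hat\mu$ is uniformly expanding.

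The main obstacle I anticipate is the bookkeeping in the upgrade step: one has invariance of $\Phi$ only for \emph{almost every} parameter $t$, and only after the extra twist by $f_0$, so one must be careful that (i) the set of good $t$ has full measure in $[-\eps,\eps]$, hence is closed under the subtraction trick on a set of positive measure of pairs, producing a set of differences containing an interval around $0$ (Steinhaus), and (ii) the $f_0$-factor genuinely cancels — which it does because $\Phi$ is separately invariant under $f_0$ (the $p_0\delta_{f_0}$ atom), so from $D(g_i^t\circ f_0)$-invariance and $Df_0$-invariance one recovers $Dg_i^t$-invariance directly, without needing to cancel $f_0$ via composition. Once that is set up, invoking Proposition~\ref{l.noninv} is immediate.
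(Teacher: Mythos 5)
Your proof is correct and follows essentially the same route as the paper: reduce via Theorem~\ref{teo.equivunifexp} and the invariance principle (Theorem~\ref{teo-invpple}) to producing a $\nu$-measurable invariant line field or conformal structure, cancel the $f_0$-factor using the atom $p_0\delta_{f_0}$, and rule the resulting object out by combining Proposition~\ref{p.acstationary} with Proposition~\ref{l.noninv}. The only (harmless) difference is that you bypass the paper's intermediate identification of a non-hyperbolic stationary measure with $\mathrm{vol}$ via unique ergodicity of $g_2^\alpha\circ g_1^\beta$, instead applying Proposition~\ref{p.acstationary} uniformly in both cases.
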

\begin{proof}
Let us first show that if $\nu$ is an ergodic stationary measure then it has to be hyperbolic. If it is not, then we can apply Theorem \ref{teo-invpple} and deduce that $\nu$ is $f_i^t$ invariant for all $i$ where $f_i^t = g_i^t \circ f_0$. But this means that $\nu = (g^t_i)_\ast (f_0)_\ast \nu = (g_i^t)_\ast \nu$ for almost every\footnote{Since preserving a measure is a closed condition, we can actually say that $f_i^t$ and $g_i^t$ preserve $\nu$ \emph{for every} $t \in (-\eps,\eps)$, but this is not necessary.} $t \in (-\eps,\eps)$; where the last equality follows from the fact that $\nu$ is $f_0$-invariant. Since $g_2^\alpha \circ g_1^\beta$ is uniquely ergodic for some small irrational $\alpha,\beta$ we deduce that $\nu= \mathrm{vol}$. 

We first show that $\mathrm{vol}$ is a hyperbolic measure for the random walk. Using Theorem \ref{teo-invpple} it is enough to show that there are no $\mathrm{vol}$-measurable invariant line fields or conformal structures. But this follows from Proposition \ref{l.noninv} because if $E$ is (say) an invariant line field by $\hat \mu$-ae. $f$ it follows that it is $Df_0$ invariant as well as invariant under some $D(g_i^t \circ f_0)$ for almost all $t \in (-\eps,\eps)$. In particular, we deduce that there are irrational numbers $\alpha, \beta \in (-\eps,\eps)$ and positive numbers $a,b \in (0,\eps)$ such that $E$ is invariant under $D(g_1^\alpha \circ f_0)$, $D(g_2^\beta \circ f_0)$, $D(g_3^a\circ f_0)$ and $D(g_4^b\circ f_0)$. Using that the line field is $Df_0$ invariant we deduce it has to be $Dg_1^\alpha, Dg_2^\beta, Dg_3^a, Dg_4^b$ invariant, which is impossible due to Proposition \ref{l.noninv}. The same argument applies for measurable conformal structures.   

This implies that every $\hat \mu$-ergodic stationary measure $\nu$ is hyperbolic. 

Now we want to show that the stable direction of $\nu$ is not non-random.  But this follows using the same argument, applying Proposition \ref{p.acstationary} that shows that $\nu$ has an absolutely continuous part and so one can apply Proposition \ref{l.noninv} to show that the stable direction cannot be non-random. 
\end{proof}

Now we are in conditions of showing:

\begin{proof}[Proof of Theorem \ref{teo-main}]
We have established that $\hat \mu$ is uniformly expanding. That is, there is $N>0$ such that for every $v \in \TT^2$ and $w \in T_v \TT^2$ unit vector we have that that 

\begin{equation}
\int \log \|D_v f w \| d\hat \mu^{(N)}(f) > 4. 
\end{equation}

Consider a sequence $\mu_n$ of finitely supported measures whose support is contained in the support of $\hat \mu$ such that $\mu_n \to \hat \mu$ in the weak star topology. It follows that for every $v \in \TT^2, w \in T_v \TT^2$  there is $n_0=n_0(v,w)$ we have that for $n> n_0$ we have 

\begin{equation}\label{eq:mun}
  \int \log \|D_v f w \| d \mu_n^{(N)}(f) > 3. 
  \end{equation}

Using the fact that the support of $\mu_n$ is contained in $\cU$ we know that there is $\delta>0$ such that if $d(v,v')<\delta$ and $d(w,w') < \delta$ then if $n>n_0(v,w)$, then:

\begin{equation}\label{eq:mun2}
  \int \log \|D_{v'} f w' \| d \mu_n^{(N)}(f) > 2. 
  \end{equation}
  
This implies that one can choose a uniform $n_1$ so that if $n>n_1$ then \eqref{eq:mun2} holds for every $v,w$ which is what we want to prove.   

\end{proof}

\section{Some comments and possible extensions}\label{s.comments}
\subsection{Equidistribution} 
Here we prove Corollary \ref{cor-main} and make some additional comments. 

\begin{proof}[Proof of Corollary \ref{cor-main}]
Using \cite[Theorem D]{Chung} we know that for a uniformly expanding measure, every orbit is either finite or dense, and moreover, dense orbits equidistribute. Since in Theorem \ref{teo-main} we have constructed a uniformly expanding measure with finite support in every open set of $\Diff^\infty_{vol}(\TT^2)$ we deduce that for every $\mu'$ in a neighborhood of $\mu$ in the weak-$\ast$ topology such that the support\footnote{This is to guarantee that $\mu'$ is also uniformly expanding.} of $\mu'$ is contained in a neighborhood of the one of $\mu$ we have that every orbit by the random walk generated by $\mu'$ is either finite or equidistributed with respect to $\mathrm{vol}$ (thus dense). 

A finite orbit must be invariant under $\mu'$-a.e. diffeomorphism of the support. Recall that without loss of generality, we can assume that $\mu$, the measure constructed in Theorem \ref{teo-main} is supported on finitely many diffeomorphisms including $f_0$ (a certain diffeomorphism chosen somewhere) and $g_1^\alpha \circ f_0$ with some $\alpha \notin \QQ$. This implies that if there is a finite orbit then it must be invariant under both diffeomorphisms, but this would imply that it is invariant under $g_1^\alpha$ which does not have finite orbits. This shows that every orbit of the random walk generated by $\mu$ equidistributes towards $\mathrm{vol}$.  

To get the stable ergodicity of the semigroup generated by the elements of the support of $\mu$ we just need to notice that after perturbation it is not possible that every point has finite orbit because every stationary measure is hyperbolic and therefore there must be infinite orbits\footnote{In fact, it gives stable ergodicity of the random walk, a concept we have not defined, see \cite{DK,Zhang}.}.
\end{proof}

In fact, one can easily show that in a neighborhood of $\mu$ there is a residual subset of measures that generate a minimal semigroup: 

\begin{prop}
Given open sets $\cU_1, \ldots, \cU_k \in \Diff^\infty_{vol}(\TT^2)$ there is a residual subset $\cR$ of the product $\cU_1 \times \ldots \times \cU_k$ such that if $(f_1, \ldots, f_k) \in \cR$ then the diffeomorphisms $f_1, \ldots, f_k$ do not have a common invariant finite set. 
\end{prop}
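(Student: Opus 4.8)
The plan is to obtain $\cR$ as a countable intersection of open dense sets, one for each "potential finite orbit size." For each $n \in \NN$, let $\cV_n \subset \cU_1 \times \cdots \times \cU_k$ be the set of tuples $(f_1,\ldots,f_k)$ for which there is \emph{no} set $F \subset \TT^2$ with $\# F \le n$ that is invariant under all of $f_1,\ldots,f_k$. I claim each $\cV_n$ is open and dense, and $\cR := \bigcap_{n} \cV_n$ is the desired residual set: a common invariant finite set would have some finite cardinality $n$, hence would obstruct membership in $\cV_n$.

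First I would check that $\cV_n$ is open. This is essentially compactness: the space of subsets of $\TT^2$ of cardinality at most $n$ is compact in the Hausdorff topology (it is a quotient of $(\TT^2)^n$), and the condition "$F$ is invariant under $f_1,\ldots,f_k$" is closed in $(F, f_1,\ldots,f_k)$. So the set of tuples admitting \emph{some} such invariant $F$ is closed, being the image of a compact set under a proper/closed projection; its complement $\cV_n$ is open.

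The main work is density. Given an arbitrary tuple $(f_1,\ldots,f_k)$ and $\eps > 0$, I want to perturb each $f_i$ within $\cU_i$ (and within $\eps$) so that the result lies in $\cV_n$. Here I would use a standard genericity trick: perturb only $f_1$, leaving the others fixed. Suppose after the perturbation there were still a common invariant set $F$ with $\#F \le n$. Then $F$ is in particular invariant under the \emph{unperturbed} $f_2$, so $F$ lies in the (a priori large but) $f_2$-invariant finite sets; more usefully, I would argue that a generic $C^\infty$ volume-preserving perturbation of $f_1$ has the property that its only finite invariant sets of size $\le n$ are… well, one cannot kill \emph{all} finite invariant sets of a single diffeomorphism (periodic orbits are abundant), so the cleaner route is: a generic small perturbation $\tilde f_1$ of $f_1$ can be chosen so that $\tilde f_1$ and $f_2$ have \emph{no} common periodic orbit of period $\le n$. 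Indeed, fix a finite $\eps$-net and use that the set of $\tilde f_1 \in \cU_1$ for which $\mathrm{Per}_{\le n}(\tilde f_1) \cap \mathrm{Per}_{\le n}(f_2) = \varnothing$ is open (by the openness argument above applied with $f_3,\ldots,f_k$ dropped — any common invariant $F$ with $\#F\le n$ consists of periodic points of each map of period dividing $\#F$) and dense: one can push the finitely many periodic points of $\tilde f_1$ of period $\le n$ off the finite set $\mathrm{Per}_{\le n}(f_2)$ by an arbitrarily small conservative perturbation supported near them (e.g. using the families $g_i^t$ of Section~\ref{s.diffusion}, or any local volume-preserving perturbation). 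Since a common invariant set of cardinality $\le n$ for $\{f_1,\ldots,f_k\}$ would in particular be a common periodic orbit of $\tilde f_1$ and $f_2$ of period $\le n$, this places the perturbed tuple in $\cV_n$.

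The step I expect to be the main obstacle is making the density perturbation genuinely lie in $\cU_1$ and be $C^\infty$-small while moving the period-$\le n$ points of $\tilde f_1$ off of the fixed finite set $\mathrm{Per}_{\le n}(f_2)$: one must ensure the perturbation does not \emph{create} new low-period points landing on $\mathrm{Per}_{\le n}(f_2)$, which is handled by keeping the perturbation supported in small neighborhoods of the existing period-$\le n$ points (finite in number, after a preliminary generic perturbation making all of them hyperbolic, hence isolated and finite) and checking that on the complement no new short periodic orbits appear — a routine but slightly delicate bookkeeping that is standard in the closing-lemma/Kupka–Smale circle of ideas. Once $\cV_n$ is open and dense for every $n$, Baire's theorem in the (completely metrizable) space $\cU_1 \times \cdots \times \cU_k$ gives that $\cR = \bigcap_n \cV_n$ is residual, completing the proof. $\Box$
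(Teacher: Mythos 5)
Your overall architecture --- writing $\cR$ as $\bigcap_n \cV_n$, where $\cV_n$ is the set of tuples admitting no common invariant set of cardinality at most $n$, and proving each $\cV_n$ open and dense --- is the same as the paper's, and your openness argument via compactness of the space of subsets of cardinality at most $n$ is correct (and more detailed than the paper's one-line assertion that the complement is closed). The gap is in the density step, and it is genuine rather than a missing detail: you insist on perturbing only $f_1$ while keeping $f_2,\ldots,f_k$ fixed, and your argument needs $\mathrm{Per}_{\le n}(f_2)$ to be a finite set so that the (finitely many, after the Kupka--Smale perturbation) period-$\le n$ points of $\tilde f_1$ can be pushed off it. Since $f_2$ is an arbitrary, \emph{unperturbed} element of $\cU_2$, the set $\mathrm{Per}_{\le n}(f_2)$ can be infinite and can even have nonempty interior. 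Concretely, if $\mathrm{id} \in \cU_2$ and $f_1 \in \cU_1$ has a hyperbolic fixed point $p$, then every $\tilde f_1$ close to $f_1$ has a fixed point near $p$ (the hyperbolic continuation), and that point is a common invariant singleton for $(\tilde f_1, \mathrm{id})$; hence no small perturbation of the first coordinate alone can place the tuple in $\cV_1$. So the strategy ``perturb only $f_1$'' provably fails in some slices, even though $\cV_n$ is in fact dense in the product.

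The repair is exactly what the paper does: use your preliminary Kupka--Smale perturbation to make the set $P_n$ of points whose $\tilde f_1$-orbit has at most $n$ points finite, and then perturb the \emph{other} coordinates as well --- density in the product topology permits this --- so that, say, $f_2(P_n) \cap P_n = \varnothing$. Any common invariant set of cardinality at most $n$ would be a nonempty subset of $P_n$ permuted by $f_2$, so this lands the tuple in $\cV_n$. (When writing this up, keep the precision that the paper itself glosses over: it is not enough that the perturbed $f_2$ fail to preserve the whole set $P_n$; one must exclude invariant \emph{subsets} of $P_n$, which is why one moves $f_2(P_n)$ entirely off $P_n$.) With that single change of which coordinates get perturbed, the rest of your write-up (openness of $\cV_n$, Baire category) goes through.
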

\begin{proof}
We can assume that $f_1$ is Kupka-Smale, so in particular, it has finitely many orbits of period $\leq N$ for every $N>0$. We claim that for every $N$, the set $\cA_N$ of $\cU_2 \times \ldots \times \cU_k$ consisting of diffeomorphisms $(f_2, \ldots, f_k)$ that do not preserve the set 
$$P_N = \{ x \in \TT^2 \ : \ \text{the orbit of } x \text{ under } f_1 \text{ has less than } N \text{ points }\},$$
\noindent is open and dense in $\cU_2 \times \ldots \times \cU_k$. It is clear that $\cA_N^c$ is closed, and since $P_N$ is a finite set for every $N$, a small perturbation of some of the diffeomorphisms allows one to remove the family from $\cA_N^c$ so we complete the proof. 
\end{proof}

\begin{remark}\label{r.nonvolumepres}
We point out that the condition of being uniformly expanding is open among measures supported in $\Diff^\infty(\TT^2)$ and not just those preserving volume. The results of \cite{BRH} hold in this more general setting, but instead of equidistribution to $\mathrm{vol}$ give equidistribution to some SRB-measure (this is enough to get robust transitivity of the semigroup, for instance). We refer the reader to that paper for more discussion. We also point out the recent preprint \cite{FNR} where a notion of stable ergodicity outside volume preserving semigroups is proposed. 
\end{remark}

\subsection{Bound on the cardinality of the support}
Theorem \ref{teo-main} can be compared with \cite[Theorem A]{Chung}. On the one hand, here we obtain perturbations of any map and show that they are uniformly expanding, but on the other hand the results in \cite{Chung} are quite deeper as they allow to control the number of diffeomorphisms in the support (the results are more \emph{effective}). In particular, the way that uniform expansion is checked in \cite{Chung} (see \cite[Proposition 5.4]{Chung}) allows to check it for a given family of diffeomorphisms while here we use some abstract criteria that produces uniform expansion for a continuous measure, and then gives the finite support by a discretisation argument that does not control the number of diffeomorphisms in the support. 

A particularly puzzling question that one can pose in the direction of trying to control the number of diffeomorphism in the support of a measure which is uniformly expanding close to a given diffeomorphism is the following:

\begin{quest}
Is it possible to show that for $\alpha, \beta \in \RR \setminus \QQ$ and small $a,b>0$ we have that the diffeomorphisms $g_1^\alpha, g_2^\beta, g_3^a, g_4^b$ do not leave any $\nu$-measurable line field where $\nu$ is any measure which is quasi-invariant under all the diffeomorphisms? (cf. Proposition \ref{l.noninv}.)
\end{quest}

We can show the following. Denote $\hat g_1 = g_1^\alpha$, $\hat g_2= g_2^\beta$,  $\hat g_3=g_3^a$, $\hat g_4= g_4^b$ for small $\alpha, \beta \in \RR\setminus \QQ$ and $a,b>0$.  

\begin{prop}
Let $\hat \mu$ be a (symmetric) probability measure in $\Diff^\infty_{vol}(\TT^2)$ such that $\hat \mu (\{\hat g_i^{\pm 1}\}_{i=1,2,3,4})=1$ and that $\hat \mu(\{\hat g_i\})=\hat \mu(\{ \hat g_i^{-1}\})>0$ for $i=1,2,3,4$. Then $\hat \mu$ is uniformly expanding.
\end{prop}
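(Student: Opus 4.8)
The plan is to follow essentially the same strategy as in the proof of Proposition~\ref{p.unifexp}, using the equivalence in Theorem~\ref{teo.equivunifexp}: it suffices to show that every ergodic $\hat\mu$-stationary measure $\nu$ is expanding and admits no non-random weak stable direction. The three ingredients I would need are: (i) an analogue of Proposition~\ref{p.acstationary} showing every $\hat\mu$-stationary measure is not mutually singular with $\mathrm{vol}$; (ii) Theorem~\ref{teo-invpple} to rule out non-hyperbolicity; and (iii) Proposition~\ref{l.noninv} to rule out invariant line fields / conformal structures for the absolutely continuous part. The only genuinely new point compared to Proposition~\ref{p.unifexp} is that here $\hat\mu$ is purely atomic, so the smoothing argument of Proposition~\ref{p.acstationary} — which relied on convolving with the continuous pieces $\int_{-\eps}^{\eps}\delta_{g_i^t\circ f_0}\,dt$ — is not available verbatim.

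To handle (i) in the atomic setting, I would argue as follows. Let $\nu$ be $\hat\mu$-stationary and let $\nu_0\perp\mathrm{vol}$ be its mutually singular part; I want $\nu_0=0$. Because $\hat\mu$ is supported on the translations $\hat g_1^{\pm1}=R_{(\pm\alpha,0)}$ and $\hat g_2^{\pm1}=R_{(0,\pm\beta)}$ together with $\hat g_3^{\pm1}$, $\hat g_4^{\pm1}$, and because $\alpha,\beta$ are irrational, the pushforwards $(\hat g_i)_*$ preserve the class of $\mathrm{vol}$, hence preserve the Lebesgue decomposition: the singular part $\nu_0$ (suitably normalized) is itself stationary for the sub-probability obtained by restricting $\hat\mu$. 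But by the semigroup generated: the random walk using $\hat g_1^{\pm 1}, \hat g_2^{\pm 1}$ alone acts as rotations, and any stationary measure for the walk generated by two rational-independent irrational rotations must be $\mathrm{vol}$ (the only stationary measure for a walk on a group acting by translations, with support generating a dense subgroup, is Haar, by unique ergodicity of the associated group rotation / Choquet--Deny). Since $\mathrm{vol}$ is absolutely continuous, $\nu_0$ must be zero, i.e.\ $\nu\ll\mathrm{vol}$; in particular $\nu$ is not mutually singular with $\mathrm{vol}$, which is all I need to invoke Proposition~\ref{l.noninv}.

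With (i) in hand, steps (ii) and (iii) are copied from Proposition~\ref{p.unifexp}. For (ii): if $\nu$ were not expanding, Theorem~\ref{teo-invpple} gives that $\nu$ is $\hat g_i$-invariant for all $i$; in particular $\nu$ is invariant under the uniquely ergodic map $\hat g_2\circ\hat g_1$ (for $\alpha,\beta$ small irrational and rationally independent), forcing $\nu=\mathrm{vol}$; but then Theorem~\ref{teo-invpple} also produces a $\mathrm{vol}$-measurable invariant line field or conformal structure, contradicting Proposition~\ref{l.noninv} applied to $\hat g_1,\hat g_2,\hat g_3,\hat g_4$. For (iii): since $\nu\ll\mathrm{vol}$ is not mutually singular with $\mathrm{vol}$, a non-random weak stable direction would give a $\nu$-measurable invariant line field (the stable line, one-dimensional since $\nu$ is hyperbolic and volume-preserving), again invariant under every $\hat g_i$, again contradicting Proposition~\ref{l.noninv}. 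Hence $\nu$ is expanding with no non-random weak stable direction, and Theorem~\ref{teo.equivunifexp} yields that $\hat\mu$ is uniformly expanding.

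The main obstacle I anticipate is step (i): making rigorous that an atomic stationary measure cannot have a singular part. The cleanest route is the observation that the four generators $\hat g_i$ and their inverses all preserve the Lebesgue measure class (they are volume-preserving diffeomorphisms of $\TT^2$!), so the Lebesgue decomposition $\nu = \nu_{ac}+\nu_{sing}$ is preserved by each $(\hat g_i)_*$, hence each summand is separately $\hat\mu$-stationary after normalization; one then shows the singular part must be $0$ by noting that its support, being $\hat g_1$- and $\hat g_2$-quasi-invariant, would have to be a $\mathrm{vol}$-null set invariant (up to measure) under the minimal rotations $\hat g_1,\hat g_2$ — but Poincaré recurrence plus minimality of $\hat g_2\circ\hat g_1$ (or of the $\ZZ^2$-action by $\hat g_1,\hat g_2$) forces any such invariant class to be everything, a contradiction with being $\mathrm{vol}$-null unless it is empty. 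Once this is settled the rest is routine and parallels Proposition~\ref{p.unifexp} verbatim.
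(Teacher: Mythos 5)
There is a genuine gap, and it sits exactly at your step (i). Your Lebesgue-decomposition observation is fine as far as it goes: since each $\hat g_i$ preserves the class of $\mathrm{vol}$, the singular part $\nu_{sing}$ of a $\hat\mu$-stationary $\nu$ is itself $\hat\mu$-stationary after normalization. But the argument you give to kill $\nu_{sing}$ does not work. The Choquet--Deny/unique-ergodicity fact you invoke applies only to random walks by translations; here the walk also charges the shears $\hat g_3^{\pm1},\hat g_4^{\pm1}$, so $\nu_{sing}$ is stationary for a walk that is not a walk on a group, and you cannot conclude it is Haar. Your fallback via the support is also flawed on two counts: stationarity gives quasi-invariance of $\nu$ under each $\hat g_i$, hence invariance of the \emph{topological} support, but (a) a measure with full topological support can perfectly well be singular, and (b) irrational rotations admit plenty of invariant $\mathrm{vol}$-null Borel sets (any union of orbits), so ``minimality forces the invariant class to be everything'' is false at the level of measure classes. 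In fact the assertion that every stationary measure of this purely atomic walk is non-singular with respect to $\mathrm{vol}$ is precisely the kind of statement the paper flags as open (see the two Questions surrounding this proposition); the whole point of the proposition is to get uniform expansion \emph{without} knowing it. Note also that your proof never uses the hypothesis that $\hat\mu$ is symmetric, which should have been a warning sign.

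The paper's route is different where it matters. Hyperbolicity of every ergodic stationary measure is obtained as you do (invariance principle forces $\nu=\mathrm{vol}$ in the non-expanding case, then Proposition \ref{l.noninv} applies since $\mathrm{vol}$ is trivially non-singular with itself). But to exclude a non-random weak stable direction for a hyperbolic $\nu$ that may well be singular, the paper uses the symmetry of $\hat\mu$: if $E^s$ did not depend on the word $\omega$, the involution $S(\omega,v)=(\hat\omega,v)$ with $\hat\omega=(\omega_0^{-1},\omega_1^{-1},\ldots)$ preserves $\hat\nu=\hat\mu^{\ZZ_{\geq 0}}\times\nu$, the integrand $\psi(\omega,(x,y))=\log\|D_{(x,y)}\omega_0|_{E^s((x,y))}\|$ satisfies $\psi\circ S=-\psi$, hence $\int\psi\,d\hat\nu=0$ and the stable exponent vanishes, contradicting hyperbolicity. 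You would need either to supply this symmetry argument or to actually prove your claim (i), which is substantially harder than the proposition itself.
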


\begin{proof} 
Using Proposition \ref{l.noninv} we know that every stationary measure is hyperbolic (note that if $\nu$ is a stationary measure which is not hyperbolic, it must be $\hat g_i$-invariant for all $i$ by Theorem \ref{teo-invpple} and thus has to be $\mathrm{vol}$ and preserve a measurable line field or conformal structure and then one can apply Proposition \ref{l.noninv}). 

So, we only need to check that if $\nu$ is a $\hat \mu$-stationary measure, then the stable Oseledets direction $E^s$ (measurable and defined $\hat \mu^{\ZZ_{\geq 0}} \times \nu$-a.e.) is not invariant under all $g_i$. 

Assume by contradiction that $E^s$ is invariant. Consider the skew-product dynamics $F: \Sigma^+ \times \TT^2 \to \Sigma^+ \times \TT^2$ where $$\Sigma^+ = \{\hat g_1, \hat g_1^{-1}, \hat g_2, \hat g_2^{-1},\hat  g_3, \hat g_3^{-1}, \hat g_4, \hat g_4^{-1}\}^{\ZZ_{\geq 0}}$$ \noindent which we know leaves invariant the measure $\hat \nu=\hat \mu^{\ZZ_{\geq 0}} \times \nu$. Denote $\omega = (\omega_i)_{i\geq 0}$ a generic word in $\Sigma^+$. By assumption, the bundle $(\omega, (x,y)) \mapsto E^{s}(\omega, (x,y))$ associated with the stable Oseledets bundle of the hyperbolic measure $\hat \nu$ does not depend on $\omega$. Since $\hat \mu$ is symmetric we get if we define $S: \Sigma^+ \times \TT^2 \to \Sigma^+ \times \TT^2$ as $S(\omega, v) = (\hat \omega , v)$ where $\hat \omega = (\omega_0^{-1}, \omega_1^{-1}, \ldots )$ we get that $S_\ast \hat \nu = \hat \nu$. 

Then we can define the measurable function $\psi: \Sigma^+ \times \TT^2 \to \RR$ as $(\omega, (x,y)) \mapsto \log \|D_{(x,y)}\omega_0|_{E^s((x,y))}\|$, then, using that $S_\ast \hat \nu = \hat \nu$  and that $\psi \circ S = - \psi$ we get  
$$\int_{\Sigma^+ \times \TT^2} \psi d\hat \nu = \int_{\Sigma^+ \times \TT^2} \psi d S_\ast \hat \nu = \int_{\Sigma^+ \times \TT^2} \psi \circ S d\hat \nu= $$ $$=  -\int_{\Sigma^+ \times \TT^2} \psi d\hat \nu  \Rightarrow  \int_{\Sigma^+ \times \TT^2} \psi d\hat \nu = 0$$ Therefore, the Birkhoff ergodic theorem implies that $\hat \nu$ has a zero Lyapunov exponents, a contradiction. (Compare with \cite[Lemma 13.2]{BRH}.)  
\end{proof}

Using the same argument as in Proposition \ref{p.unifexp} to show that we can have uniformly expanding measures in any open set with support in a uniformly bounded number of diffeomorphisms one could use the diffeomorphisms of the previous proposition if one can answer the following which may have interest by itself (compare with \cite[\S 2]{Chung}, \cite[\S 4.3]{LX}): 

\begin{quest}
Assume that a  finitely supported probability measure $\hat \mu$ on $\Diff^{\infty}_{vol}(\TT^2)$ is uniformly expanding. Is it true that if $\nu$ is a measure which is quasi-invariant under every $f$ in $\mathrm{supp}(\hat \mu)$ it follows that there are no $\nu$-measurable line fields (defined $\nu$-ae) which are invariant under every $f \in \mathrm{supp}(\hat \mu)$?
\end{quest}

\subsection{Higher dimensions}
We comment now on the extensions to higher dimensions (or other surfaces). First of all, we note that in \cite{BEF} uniform expansion will be used to obtain rigidity statements about stationary measures, so the notion is still relevant in higher dimensions. Here we must remark that uniform expansion in higher dimensions admits different formulations. One could keep the exact Definition \ref{def-Nexp} which also makes sense in higher dimensions, but one could also ask for something stronger, in particular asking that the condition holds not only for vectors but also for exterior products up to codimension one. For such condition, one can also obtain a result analogous to Theorem \ref{teo.equivunifexp}.  

About the proofs, we only used $\TT^2$ to have specific coordinates and have a simple proof of Proposition \ref{l.noninv}. Let us sketch a way to obtain a similar result for general closed manifolds. 

Fix any closed manifold $M$. For every $x \in M$ we can find a continuous finite parameter familly $g_x^a \in \Diff^\infty_{vol}(M)$ with $a \in (-1,1)^\ell$ so that: 

\begin{itemize}
\item There is a neighbourhood $U_x$ of $x$ so that for every $y \in U$ we have that the map $a \mapsto g_x^a(y)$ is a smooth map from $(-1,1)^\ell$ to $M$ and the derivative at $0$ is surjective. 
\item For every $1 \leq i \leq d-1$ and $w \in \mathrm{Gr}_i(T_xM)$ the map $a \mapsto D_x g_x^a(w)$ is a smooth map from $(-1, 1)^\ell$ to $\mathrm{Gr}_i(TM)$ whose derivative is surjective at $0$. 
\end{itemize}

Note that such a family of maps can be constructed first in $\RR^d$ with respect to the origin\footnote{One uses the first $d$-parameters to move the origin and get the first condition, and then, for each $1 \leq i \leq d-1$ one can use $\mathrm{dim}(\mathrm{Gr_i}(\RR^d))$ parameters to fulfil the second condition for each $i$.} and then send them to $M$ via coordinate charts. We can find a finite set $x_1, \ldots, x_n$ of $M$ so that the neighborhoods $U_{x_i}$ of the first item cover $M$. 

By considering any diffeomorphism $f_0 \in \Diff^\infty_{vol}(M)$ we can construct a diffusion $\hat \mu$ by considering a small delta at $f_0$ together with measures that charge uniformly the submanifolds $a \mapsto g_{x_i}^a$ for $a \in (-\eps,\eps)^\ell$ in such a way that $\hat \mu$ is supported in a given neighborhood $\cU$ of $f_0$ in $\Diff^\infty_{vol}(M)$. Proposition \ref{p.acstationary} will work exactly the same in this context to give that any stationary measure will have some absolutely continuous part. It is not hard to see that for such a family of diffeomorphisms, the unique common invariant measure has to be $\mathrm{vol}$ so we deduce that if there is a stationary measure which is not expanding, then it must be $\mathrm{vol}$. Finally, using the second property and Theorem \ref{teo-invpple} we can see that $\mathrm{vol}$ is also expanding. 

Using the fact that stationary measures have an absolutely continuous part, and that we can take points everywhere using the diffeomorphisms that we chose, we can argue as in 
Proposition \ref{p.unifexp} (changing Proposition \ref{l.noninv} by a finer use of the second property defining the parametric families of diffeomorphisms) to deduce that $\hat \mu$ cannot leave invariant any bundle and therefore it is uniformly expanding. Finally, a discretization using the openness of the uniform expansion property allows to make the support finite and show the analogous result as Theorem \ref{teo-main} for any closed manifold. 

\medskip

{\small \emph{Acknowledgements:} This work reports some discussions with Alex Eskin in Fall 2019 at the Institute for Advanced Study, I'd like to thank him for his patience and generosity in explaining me the objects described here. The work benefited from discussions with Sylvain Crovisier, Davi Obata, Mauricio Poletti and Zhiyuan Zhang as well as  comments and suggestions on the text by Sylvain Crovisier and Davi Obata.}

\end{document}